\theoremstyle{plain}
\newtheorem{thm}{Theorem}
\newtheorem{cor}[thm]{Corollary}
\newtheorem{lem}[thm]{Lemma}
\theoremstyle{definition}
\theoremstyle{definition}
\theoremstyle{plain}
\newtheorem{conj}{Conjecture}
\theoremstyle{plain}
\theoremstyle{plain}
\newcommand{\qqed}{\hfill $\square$}
\newcommand{\ud}{\mathrm{deg}}
\begin{document}

\thispagestyle{empty}

\begin{center}
{\Large\bf A Note On Vertex Distinguishing Edge colorings of Trees}\\[8pt]

{\large Songling \textsc{Shan}$^a$,\quad Bing \textsc{Yao}$^{b,}$\footnote{Corresponding author, Email: yybb918@163.com}, }\\[8pt]

{\footnotesize \emph{a.} Department of Mathematics and Statistics of Georgia State
University, Atlanta, 30303-3083, USA.\\
E-mail: ld5772156649@163.com\\
\emph{b.} College of Mathematics and Statistics, Northwest
Normal University, Lanzhou, 730070, P.R.China
}
\end{center}

\begin{abstract}
A proper edge coloring of a simple graph $G$ is called a vertex
distinguishing edge coloring (vdec) if for any two distinct vertices $u$
and $v$ of $G$, the set of the colors assigned to the edges incident
to $u$ differs from the set of the colors assigned to the edges
incident to $v$. The minimum number of colors required for
all vdecs of $G$ is denoted by $\chi\,'_s(G)$ called the vdec chromatic number of $G$. Let $n_d(G)$ denote the number of vertices of degree $d$ in $G$. In this note, we show that a tree $T$ with $n_2(T)\leq n_1(T)$ holds $\chi\,'_s(T)=n_1(T)+1$ if its diameter $D(T)=3$ or one of two particular
trees with $D(T) =4$, and $\chi\,'_s(T)=n_1(T)$ otherwise; furthermore $\chi\,'_{es}(T)=\chi\,'_s(T)$ when $|E(T)|\leq 2(n_1(T)+1)$, where $\chi\,'_{es}(T)$ is the equitable vdec chromatic number of $T$.\\[6pt]
\textbf{AMS Subject Classification (2000):} 05C15\\[6pt] \textbf{Keywords:}
vertex distinguishing coloring, edge coloring, trees
\end{abstract}

\section{Introduction and concepts}

Labeled graphs are becoming an increasingly useful family of
mathematical models for a broad range of applications, such as
\emph{time tabling and scheduling}, \emph{frequency assignment},
\emph{register allocation}, \emph{computer security} and so on. In
\cite{Burris-Schelp1997}, Burris and Schelp introduced that a proper
edge $k$-coloring of a simple graph $G$ is called a \emph{vertex
distinguishing edge $k$-coloring} ($k$-vdec, or vdec for short) if for any two distinct
vertices $u$ and $v$ of $G$, the set of the colors assigned to the
edges incident to $u$ differs from the set of the colors assigned to
the edges incident to $v$. The minimum number of colors required for
a vertex distinguishing coloring of $G$ is denoted by $\chi
\,'_s(G)$. The maximum and minimum degrees of $G$ are denoted by $\Delta (G)$ and $\delta(G)$, respectively. Let $n_d(G)$ denote the number of vertices of degree $d$ with respect to $\delta(G)\leq d\leq
\Delta(G)$ in $G$, or write $n_d=n_d(G)$ if there is no confusion.
Burris and Schelp \cite{Burris-Schelp1997} presented the following
conjecture:

\begin{conj} \label{conj:c4-Burris-Schelp-conjecture}
Let $G$ be a simple graph having no isolated edges and at most one isolated vertex, and let $k$ be the smallest integer such that $(^k_d)\geq n_d$ for all $d$ with respect to $\delta(G)\leq d\leq
\Delta(G)$. Then $k\leq \chi \,'_s(G)\leq k+1$.
\end{conj}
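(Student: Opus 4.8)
The plan is to separate Conjecture~\ref{conj:c4-Burris-Schelp-conjecture} into its two halves, $k \le \chi'_s(G)$ and $\chi'_s(G) \le k+1$, treating the first as routine and concentrating the effort on the second. Before starting I would note that the hypotheses ``no isolated edges'' and ``at most one isolated vertex'' are exactly what removes the forced collisions at the bottom two degrees: two degree-$1$ vertices are indistinguishable precisely when they are the ends of a common edge (an isolated $K_2$), and two degree-$0$ vertices always share the empty color set, matching $\binom{k}{1}=k$ and $\binom{k}{0}=1$. With these degeneracies excluded, the counting bound below is the only obstruction to the lower estimate.

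For $k \le \chi'_s(G)$, fix a vdec of $G$ on a color set $C$ with $|C|=\chi'_s(G)$ and, for each vertex $v$, let $S(v)\subseteq C$ collect the colors on the edges at $v$. Properness gives $|S(v)|=\deg(v)$ and the distinguishing property makes $v\mapsto S(v)$ injective, so for each $d$ the $n_d$ vertices of degree $d$ yield $n_d$ distinct $d$-subsets of $C$; hence $n_d\le\binom{\chi'_s(G)}{d}$ for all $d$ with $\delta(G)\le d\le\Delta(G)$. By the minimality defining $k$ this forces $\chi'_s(G)\ge k$.

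For the upper bound I would use the probabilistic method. Since $\binom{k}{\Delta}\ge n_\Delta\ge 1$ forces $k\ge\Delta$, we have $k+1\ge\Delta+1\ge\chi'(G)$ by Vizing's theorem, so a proper $(k+1)$-edge-coloring exists and the only question is whether one of them is vertex-distinguishing. I would draw a proper $(k+1)$-edge-coloring at random and, for every unordered pair $\{u,v\}$ of distinct same-degree vertices, introduce the bad event $A_{uv}=\{S(u)=S(v)\}$; pairs of unequal degree can never collide and are ignored. The target is $\Pr[\bigcap\overline{A_{uv}}]>0$, which I would pursue through the Lov\'asz Local Lemma: $A_{uv}$ is determined by the colors on the edges meeting $\{u,v\}$ and so is mutually independent of all $A_{xy}$ whose pairs lie far from $\{u,v\}$, giving a dependency degree $D$ controlled by $\Delta$, while $\Pr[A_{uv}]$ is of order $1/\binom{k+1}{d}$. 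The whole point is that replacing $\binom{k}{d}$ by the strictly larger $\binom{k+1}{d}$ supplies the slack that must be turned into the inequality $e\,p\,(D+1)\le 1$.

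The main obstacle is forcing that slack to work simultaneously over every degree class. When $d$ is near $\Delta\approx k$ the color sets are large but $\binom{k+1}{d}$ is tiny, so collisions are probable and the room is minimal; when $d$ is small the events are genuinely local yet $n_d$ may be as large as $\binom{k}{d}$, so the number of candidate colliding pairs is huge. I expect a single uniform invocation of the Local Lemma to break at one of these extremes, so the plan is to partition the vertices by degree into a \emph{high-degree} regime, where the scarcity of such vertices permits a direct greedy or exchange argument, and a \emph{low-degree} regime treated by the Local Lemma or the semi-random (nibble) method, and then to merge the two partial colorings while repairing properness across the boundary edges. Showing that the single extra color carries enough entropy in the worst case, and that the two regimes can be reconciled, is where the genuine difficulty of the Burris--Schelp conjecture resides.
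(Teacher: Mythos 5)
The statement you are trying to prove is not a theorem of this paper: it is the open Burris--Schelp conjecture, which the paper explicitly states it cannot settle (``It seems very difficult to settle down Conjecture~\ref{conj:c4-Burris-Schelp-conjecture}, since no more results to verify it''). The paper's actual contribution is only to \emph{verify} the conjecture for a restricted class of trees (those with $n_2(T)\leq n_1(T)$), via the structural induction of Theorem~\ref{them:main-theorem}, which pins down $\chi\,'_s(T)$ exactly as $n_1(T)$, $n_1(T)+1$, or $n_1(T)+2$ depending on small-diameter exceptional cases. So there is no proof in the paper for your proposal to match, and any complete argument you produced would be a major new result rather than a reconstruction.

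On the merits of the proposal itself: your lower-bound half is correct and standard --- properness gives $|S(v)|=\deg(v)$, injectivity of $v\mapsto S(v)$ gives $n_d\leq\binom{\chi\,'_s(G)}{d}$, and minimality of $k$ forces $\chi\,'_s(G)\geq k$. But the upper-bound half is a research plan, not a proof, and you concede as much in your final sentence. Two concrete failures: first, the Lov\'{a}sz Local Lemma in its standard form requires an underlying product space of independent random choices, whereas a \emph{uniformly random proper $(k+1)$-edge-coloring} is a globally constrained object; the events $A_{uv}$ are then all correlated through the properness constraint, the claimed ``mutual independence of far-away events'' does not hold, and the estimate $\Pr[A_{uv}]\approx 1/\binom{k+1}{d}$ is unsubstantiated in that measure. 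Second, even granting such estimates, the extremes you identify are fatal rather than merely delicate: when $n_d$ is close to $\binom{k}{d}$ the number of colliding pairs is of order $\binom{k}{d}^2$ while the single extra color only inflates the set-universe by a factor of roughly $\frac{k+1}{k+1-d}$, so the condition $e\,p\,(D+1)\leq 1$ fails by an enormous margin and no uniform nibble/greedy split along degree classes is known to repair it --- this is precisely why the conjecture remains open except in special cases such as $\Delta(G)=2$ (Balister--Bollob\'{a}s--Schelp) and the trees treated here. If your goal is to engage with this paper, the provable target is Theorem~\ref{them:main-theorem}: delete leaves and suppress $2$-degree vertices, induct on $|V(T)|$ with base cases $D(T)\in\{2,3,4\}$ handled by the explicit colorings of Lemmas~\ref{them:lemma-aa} and~\ref{them:lemma-bb}, and extend the inductive coloring with at most one new color --- an entirely constructive argument with no probabilistic input.
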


It seems very difficult to settle down Conjecture \ref{conj:c4-Burris-Schelp-conjecture}, since no more results to verify it (cf. \cite{Balister-Bollobas-Schelp}, \cite{Burris-Schelp1997}, \cite{Cerny-Hornak-Sotak1996}). We show some results on trees to confirm positively Conjecture \ref{conj:c4-Burris-Schelp-conjecture}, and try to approximate $\chi\,'_s(G)$ of a graph $G$ by $\chi\,'_s(T)$ of some tree $T$ generated from $G$. Graphs mentioned here are finite, simple and undirected. We use standard terminology and notation of graph theory, and write $[m,n]=\{m,m+1,\dots ,n\}$ for integers $m,n$ with respect to $0\leq m<n$. Let $\pi$ be a $k$-$vdec$ of a graph $G$, and $S_i=\{uv:\pi(uv)=i,uv\in
E(G)\}$ for $i\in [1, k]$. We call $\pi$ an \emph{equitable}
$k$-\emph{vdec} if $S_i$ and $S_j$ differ in size at most one for
distinct $i,j\in [1, k]$. The smallest value of $k$ such that $G$ has
an equitable $k$-\emph{vdec} is denoted by $\chi\,'_{es}(G)$. Clearly, $\chi\,'_{s}(G)\leq \chi\,'_{es}(G)$. The set of neighbors of a vertex $u$ of $G$ is denoted as $N_G(u)$, or $N(u)$ if no confusion. The diameter of $G$ is written as $D(G)$. A \emph{leaf} is a vertex of degree one, and a \emph{$k$-degree vertex} is one of degree $k\geq 2$. $P_n$ is a path of length $n-1$.

A tree $Q$ of diameter four has its own vertex set $V(Q)=W_r\cup X_m\cup \big (\bigcup ^n_{i=1}Y_i\big )$, where $W_r=\{w_0,w_i:i\in [1,r]\}$ if $r\geq 1$, $X_m=\{s_i,s\, '_i:i\in [1,m]\}$ if $m\geq 1$, $Y_i=\{t_i,t\, '_{i,j}:j\in [1,r_i]\}$ for $r_i\geq 2$ and $i\in [1,n]$ if $n\geq 1$; and $Q$ has its own edge set $E(Q)=E(W_r)\cup E(X_m)\cup \big (\bigcup ^n_{i=1}E(Y_i)\big )$, where $E(W_r)=\{w_0w_i:i\in [1,r]\}$, $E(X_m)=\{w_0s_i,s_is\, '_i:i\in [1,m]\}$, $E(Y_i)=\{w_0t_i,t_it\, '_{i,j}:j\in [1,r_i]\}$ for $r_i\geq 2$ and $i\in [1,n]$. Thereby, we can write this tree $Q$ having diameter $D(Q)=4$ as $Q=Q(r,m,n)$ with the center $w_0$ hereafter. Especially, $Q(0,2,0)=P_5$ such that  $\chi\,'_s(Q(0,2,0))=n_1(Q(0,2,0))+2$.  let $U=\{Q(0,2,0)$, $Q(r,2,0)$, $Q(0,1,1)\}$. We will show the following results in this note.

\begin{thm} \label{them:main-theorem}
Let $T$ be a tree with $n_2(T)\leq n_1(T)$ and at least three vertices. Then $\chi\,'_s(T)=n_1(T)+1$ if $D(T)=3$ and $T\in U\setminus \{Q(0,2,0)\}$; $\chi\,'_s(T)=n_1(T)+2$ if $T=Q(0,2,0)$; and $\chi\,'_s(T)=n_1(T)$ if $D(T)\geq 4$ and $T\not \in U$.
\end{thm}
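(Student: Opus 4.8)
The plan is to begin with the decisive lower bound $\chi\,'_s(T)\ge n_1(T)$: since each leaf sees exactly one color, the $n_1$ leaf-edges must receive pairwise different colors in any vdec, so at least $n_1$ colors are needed and, in an $n_1$-coloring, the leaf-edges are forced to realize a bijection onto $[1,n_1]$. Because every non-leaf vertex has degree $\ge 2$, its color set has size $\ge 2$ and is automatically distinct from every singleton leaf color set. Hence producing an $n_1$-vdec reduces to: fix a bijective coloring of the leaf-edges, then extend it properly over the internal edges so that the internal vertices receive pairwise distinct color sets. The hypothesis $n_2(T)\le n_1(T)$ enters exactly here, since the $n_2$ degree-$2$ vertices need pairwise distinct $2$-element color sets and there are $\binom{n_1}{2}\ge n_1\ge n_2$ such sets available once $n_1\ge 3$.

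Next I would dispose of the configurations where $n_1$ colors cannot suffice. For $D(T)=3$, $T$ is a double star with central edge $uv$; the $n_1$ leaf-edges use all colors and split into those at $u$ and those at $v$, so any proper color for $uv$ must avoid all $n_1$ of them, forcing $\chi\,'_s(T)\ge n_1+1$, with equality by coloring $uv$ with a new color. For $T=Q(r,2,0)$, a full color set at $w_0$ forces $\{\pi(w_0s_1),\pi(w_0s_2)\}=\{\pi(s_1s_1'),\pi(s_2s_2')\}$; properness excludes the identity assignment, so the only remaining (swapped) choice makes $s_1$ and $s_2$ share the set $\{\pi(s_1s_1'),\pi(s_2s_2')\}$, again forcing $\chi\,'_s(T)\ge n_1+1$. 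For $T=Q(0,1,1)$, properness at $t_1$ forces $\pi(w_0t_1)=\pi(s_1s_1')$, after which $w_0$ and $s_1$ collide, so $\chi\,'_s(T)\ge n_1+1$. Finally $Q(0,2,0)=P_5$ combines both obstructions, and a direct check gives $\chi\,'_s(P_5)=4=n_1+2$; in each of these cases the matching upper bound is immediate.

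The substance is the construction of an $n_1$-vdec when $D(T)\ge 4$ and $T\notin U$. Here $T=Q(r,m,n)$ with $m+n\ge 2$, and I would color every leaf-edge bijectively and then the internal edges $w_0s_i$ and $w_0t_i$. Since each $t_i$ has a color set of size $r_i+1\ge 3$, it never clashes with a degree-$2$ vertex, so the delicate part concerns the size-$2$ sets $\{\pi(w_0s_i),\pi(s_is_i')\}$ of the $s_i$ together with $w_0$. When $n=0$, the full color set at $w_0$ forces the $\pi(w_0s_i)$ to be a permutation of the ``leftover'' colors $\{\pi(s_is_i')\}$, and a valid choice exists exactly when there is a fixed-point-free permutation with no transposition, i.e. for every $m\ge 3$ but not for $m=2$; when $n\ge 1$, the $Y$-branches supply surplus colors that free up the $\pi(w_0s_i)$ and also break the degree-$2$ tie at $w_0$, unless the center itself has degree $2$ with a twin $s_1$. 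These two failure modes are precisely $Q(r,2,0)$ and $Q(0,1,1)$, with $Q(0,2,0)$ lying in both, which is how the family $U$ arises.

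For $D(T)\ge 5$ I would argue by induction. Along a longest path, a peripheral internal vertex $x$ carries only leaves besides its path-neighbor; pruning the pendant star at $x$ yields a smaller tree $T'$, and I would extend a coloring of $T'$ back across the re-attached star using the colors freed at $x$. The main obstacle, and where the bookkeeping is heaviest, is exactly this extension: one must reinstate a bijective leaf-coloring after $n_1$ changes, keep the coloring proper at the re-attachment vertex, and ---most delicately--- guarantee that the re-attached degree-$2$ and higher-degree vertices do not reproduce a color set already in use, all while checking that $T'$ still satisfies $D(T')\ge 4$, $n_2(T')\le n_1(T')$ and $T'\notin U$ so the inductive hypothesis applies. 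Choosing the pruned branch to keep $T'$ outside the forbidden family and above diameter $4$ is the crux of the whole argument.
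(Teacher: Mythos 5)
Your lower bound, your dispatch of the exceptional family $U$, and your treatment of diameters $3$ and $4$ are sound and essentially coincide with the paper's Lemmas \ref{them:lemma-aa} and \ref{them:lemma-bb}; the derangement formulation of the $Q(r,m,0)$ case (a fixed-point-free permutation with no $2$-cycle exists iff $m\ge 3$) is a clean way to see why $m=2$ fails. The problem is the inductive step for $D(T)\ge 5$, which is the bulk of the theorem and which you only outline: you name the things that must be verified after pruning a pendant star (properness and set-distinctness at the re-attachment, $D(T')\ge 4$, $n_2(T')\le n_1(T')$, $T'\notin U$) and then stop, calling their verification ``the crux of the whole argument.'' A proof must actually supply the pruning rule and the explicit extension of the coloring; as written there is no argument that any admissible choice of pruned branch exists, nor any construction of the extended vdec.

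The most concrete missing ingredient is the preservation of $n_2\le n_1$. Pruning the $k$ pendant leaves at a peripheral vertex $x$ of degree $k+1\ge 3$ turns $x$ into a leaf, so $n_1$ drops by $k-1$ while $n_2$ is unchanged; when $n_2(T)=n_1(T)$, which your hypothesis permits, the reduced tree violates the hypothesis and the induction collapses. The paper handles exactly this with a separate structural lemma (Lemma \ref{them:lemma-11}): whenever $n_2\ge n_1$ there is a $2$-degree vertex one of whose neighbors has degree at most $2$, and the reduction suppresses such vertices \emph{simultaneously} with deleting leaves so that $n_1$ and $n_2$ decrease in step. One must then write down the extension of the coloring for each resulting configuration, including the degenerate ones where the reduced tree has diameter $2$, $3$ or $4$ or lands in $U$ (these produce the borderline trees such as $Q(1,m,0)$ and $Q(0,0,n)$ that the paper checks by hand). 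None of this is present in your proposal, so the $D(T)\ge 5$ case is a genuine gap rather than a routine verification.
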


\begin{thm} \label{thm:GLT2-theorem-222}
Let $T$ be a tree with $n_2(T)\leq n_1(T)$ and at least three vertices. If $|E(T)|\leq 2[n_1(T)+1]$, then $\chi\,'_{es}(T)=\chi\,'_{s}(T)$.
\end{thm}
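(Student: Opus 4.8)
Since $\chi\,'_s(T)\le \chi\,'_{es}(T)$ holds for every graph, as noted above, it suffices to exhibit one \emph{equitable} vdec of $T$ that uses exactly $k:=\chi\,'_s(T)$ colors. Write $m=|E(T)|$. The first step is to turn the hypothesis $m\le 2[n_1(T)+1]$ into a bound on class sizes: reading off $k$ from Theorem~\ref{them:main-theorem}, one checks that $m\le 2k+2$ in all cases, and in fact $m\le 2k$ whenever $k\ge n_1(T)+1$, since then $2k\ge 2[n_1(T)+1]\ge m$. Thus a coloring with $k$ colors is equitable precisely when every class $S_i$ has size in $\{1,2\}$, the single exception being that when $k=n_1(T)$ and $m\in\{2k+1,2k+2\}$ one, respectively two, classes may have size $3$. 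Several subfamilies now become immediate: whenever $m=k$ — which holds for every tree of diameter at most $3$ (stars and double stars) — each color is used exactly once, so \emph{every} $k$-vdec is already equitable; and the finitely many explicit trees in $U$ (namely $Q(0,2,0)=P_5$, $Q(r,2,0)$ and $Q(0,1,1)$) can be handled by displaying an equitable vdec by hand. This leaves the principal case $k=n_1(T)$ with $D(T)\ge 4$ and $T\notin U$.

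\noindent The backbone of the remaining argument is the rigidity of the leaves. The $n_1(T)$ pendant edges must receive pairwise distinct colors, for otherwise two leaves would share their (singleton) incident color set and violate the vdec property. Hence in the principal case these pendant edges realize a bijection onto the color set $[1,k]$, already putting exactly one edge into each class $S_i$. The non-pendant edges of $T$ are precisely the edges of the subtree $T\,'$ spanned by the non-leaf vertices, and they number $m-n_1(T)\le k+2$. The task therefore reduces to a coloring problem on the small tree $T\,'$: color its at most $k+2$ edges with colors from $[1,k]$ so that the coloring of $T$ remains proper and vertex distinguishing, while each class is raised from size $1$ to size at most $2$ (to size $3$ for at most two classes, exactly when $m>2k$). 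This matches the equitable profile isolated in the first step.

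\noindent To carry out this coloring while respecting all three constraints simultaneously, I would start from the $k$-vdec furnished by Theorem~\ref{them:main-theorem} and rebalance its classes by Kempe exchanges confined to $T\,'$. For two classes with $|S_a|\ge|S_b|+2$, the subgraph spanned by colors $a$ and $b$ is a disjoint union of paths (by properness and acyclicity), and swapping $a\leftrightarrow b$ along a maximal $(a,b)$-alternating path that starts and ends with color $a$ moves exactly one edge from $S_a$ to $S_b$ while altering the incident color set only at the two path endpoints. Because all classes have size at most $3$, only boundedly many such transfers are needed to reach the equitable profile. The main obstacle is preserving the vertex-distinguishing property throughout, and it is sharpened by the very rigidity noted above: the $k$ leaf colors already exhaust all $k$ admissible singletons, so there is \emph{no} slack for degree-one vertices, and every exchange must both avoid flipping any pendant edge (which would collide two leaves) and avoid making the two disturbed higher-degree color sets coincide with a third vertex's set. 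For the higher-degree vertices there is room to maneuver, since the number of available $d$-subsets $\binom{k}{d}$ exceeds $n_d(T)$ for $d\ge 2$; the delicate part is choosing the alternating paths inside $T\,'$ so that their endpoints are never leaves and never create such a coincidence, especially at high-degree internal vertices where many incident colors are pre-committed by pendant edges. Establishing that such a choice always exists, together with the hand verification for the trees of $U$, is where the real work of the proof lies.
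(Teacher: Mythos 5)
Your setup is sound: since $\chi\,'_s(T)\le\chi\,'_{es}(T)$, it suffices to exhibit one equitable $k$-vdec with $k=\chi\,'_s(T)$; the pendant edges must receive pairwise distinct colors; and the hypothesis $|E(T)|\le 2[n_1(T)+1]$ correctly pins down the admissible class-size profile (all classes of size $1$ or $2$, with one or two classes of size $3$ only when $m=2k+1$ or $2k+2$). But the argument stops exactly where it needs to start. The entire content of the theorem is the claim that the at most $k+2$ internal edges can be colored so that the result is simultaneously proper, vertex distinguishing, and balanced, and you defer this to unspecified Kempe exchanges whose existence you do not establish --- indeed you state explicitly that establishing that such exchanges always exist is ``where the real work of the proof lies.'' That is an announcement of a gap, not a proof. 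Moreover the proposed mechanism faces a concrete obstruction you name but do not overcome: in the principal case every color appears on exactly one pendant edge, so the $(a,b)$-Kempe subgraph always contains the pendant edges colored $a$ and $b$; a maximal alternating path beginning and ending with color $a$ (the only kind that transfers an edge from $S_a$ to $S_b$) may be forced to terminate at the leaf carrying the $a$-colored pendant edge, and swapping along it recolors that pendant edge to $b$ and collides two leaves. You give no argument that a usable path confined to the internal subtree exists, nor that the two disturbed endpoint color sets can be kept distinct from every third vertex's set.

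For comparison, the paper does not attempt any post-hoc rebalancing. Its (admittedly very terse) proof of Theorem~\ref{thm:GLT2-theorem-222} re-runs the induction of Theorem~\ref{them:main-theorem} and asserts that the explicit colorings $\xi_T$ constructed there can, in every case, be arranged to use each color at most twice, which under $|E(T)|\le 2(n_1+1)$ forces equitability; equitability is built into the construction step by step rather than repaired afterward. Your route would be more robust if the exchange step could be justified (it would upgrade an arbitrary optimal vdec, not just the constructed one), but as written the decisive step is missing, so the proposal does not constitute a proof.
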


\section{Lemmas and proofs}

\begin{lem} \label{them:lemma-11}
Let $T$ be a tree with $n_2(T)\geq n_1(T)$. Then there exists a $2$-degree vertex such that one of its neighbors is either a leaf or a vertex of degree $2$.
\end{lem}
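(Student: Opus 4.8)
The plan is to prove the statement by contradiction: assume that \emph{no} $2$-degree vertex of $T$ has a neighbor that is either a leaf or another $2$-degree vertex, and derive $n_2(T) < n_1(T)$. Write $A=\{v\in V(T):\deg(v)\geq 3\}$ and set $a=|A|=\sum_{d\geq 3}n_d$. The first step is to record two structural consequences of the assumption. Every $2$-degree vertex then has both of its neighbors in $A$, since neither neighbor can be a leaf nor have degree $2$. Moreover, every leaf must also attach to a vertex of $A$: the unique neighbor of a leaf cannot be a $2$-degree vertex (that vertex would then have a leaf neighbor), and it cannot be another leaf once $T$ has at least three vertices. Thus, under the assumption, all leaves and all $2$-degree vertices hang off $A$.

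Next I would invoke the standard degree identity for trees. Since $\sum_{v}\deg(v)=2|E(T)|=2(|V(T)|-1)$ and $|V(T)|=n_1+n_2+a$, a short calculation gives $\sum_{d\geq 3}(d-2)n_d=n_1-2$. Because $d-2\geq 1$ for every $d\geq 3$, this yields $n_1-2=\sum_{d\geq 3}(d-2)n_d\geq \sum_{d\geq 3}n_d=a$, hence $n_1\geq a+2$. This inequality captures the fact that high-degree vertices force many leaves.

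The third step is an edge count that bounds $n_2$ from above. Using the classification from the first step, the edges of $T$ split into exactly three types: the $n_1$ leaf edges, the $2n_2$ edges joining $2$-degree vertices to $A$, and the $e_{AA}$ edges lying inside $A$; no other type occurs. Equating the total with $|V(T)|-1=n_1+n_2+a-1$ gives $e_{AA}=a-1-n_2$, and since $e_{AA}\geq 0$ we obtain $n_2\leq a-1$. Combining with $n_1\geq a+2$ yields $n_2\leq a-1\leq n_1-3<n_1$, contradicting the hypothesis $n_2\geq n_1$. Therefore some $2$-degree vertex must have a leaf or a $2$-degree neighbor.

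I expect the main obstacle to be purely in the bookkeeping rather than in any deep idea: one must verify that the edge classification in the third step is exhaustive---in particular that the assumption really does exclude leaf--leaf, leaf--degree-$2$, and degree-$2$--degree-$2$ edges---and dispose of the degenerate small cases (such as $K_2$, where a leaf is adjacent to a leaf) so that the hypothesis $n_2\geq n_1$ actually guarantees $n_2\geq 2$ and hence that a $2$-degree vertex exists at all. Once these points are handled, the two inequalities $n_1\geq a+2$ and $n_2\leq a-1$ close the argument with room to spare.
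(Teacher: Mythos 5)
Your proof is correct and follows essentially the same strategy as the paper's: argue by contradiction and combine the tree degree identity $\sum_{d\geq 3}(d-2)n_d=n_1-2$ with an upper bound on $n_2$ in terms of $n_{\geq 3}$. In fact your explicit edge classification supplies a clean justification (yielding $n_2\leq n_{\geq 3}-1$) for the inequality $n_2\leq n_{\geq 3}$ that the paper merely asserts "by the assumption," so your write-up is, if anything, the more complete of the two.
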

\begin{proof} Write $n_d=n_d(T)$ for $d\in [\delta(T),\Delta(T)]$, here $n_d=n_d(T)=0$ if $T$ has no vertices of degree $d$. Let $n_{\geq k}=|V(T)|-n_1-n_2-\cdots -n_{k-1}$ for $k\geq 2$. By contradiction. Assume that each $2$-degree vertex $x$ has its neighborhood $N(x)=\{x_1,x_2\}$ such that degree $\ud_T(x_i)\geq 3$ for $i=1,2$. So, we have $n_2\leq n_{\geq 3}$ by the assumption. Applying the formula $n_1=2+\sum _{3\leq d\leq \Delta(T)}(d-2)n_d$ shown in the article \cite{Yao-Zhang-Wang2010}, we have $n_1\geq 2+n_d$ for $d\geq 3$, and $n_1\geq 2+n_3+2n_{\geq 4}=2+2n_{\geq 3}-n_3$, which shows that $n_{\geq 3}\leq \frac{1}{2}(n_1+n_3)-1$. Thereby, the following inequalities
$$n_2\leq n_{\geq 3}\leq \frac{1}{2}(n_1+n_3)-1\leq n_1-2$$
show a contradiction.
\end{proof}

\begin{lem} \label{them:lemma-aa}
Let $T$ be a tree with diameter three. Then $\chi\,'_{s}(T)=n_1(T)+1$, and $\chi\,'_{es}(T)=\chi\,'_{s}(T)$.
\end{lem}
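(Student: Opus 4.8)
The plan is to begin by pinning down the structure of a diameter-three tree. A tree $T$ with $D(T)=3$ is precisely a double star: it has two adjacent non-leaf vertices, say $u$ and $v$, and every remaining vertex is a leaf adjacent to exactly one of them. Writing $a$ for the number of leaves at $u$ and $b$ for the number at $v$, the requirement that the diameter equal three (rather than two) forces $a,b\geq 1$. Consequently $n_1(T)=a+b$ and $|E(T)|=a+b+1=n_1(T)+1$, a count that will do most of the work.

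For the lower bound I would argue directly from the leaves. Each leaf is incident to a single edge, so its color set is the singleton given by that edge's color; two leaves are distinguished only when their incident edges get different colors. Hence the $a+b$ leaf edges must receive pairwise distinct colors. The central edge $uv$ must then, by properness, avoid every color appearing at $u$ and at $v$, that is, all $a+b$ leaf-edge colors, and so it needs an additional new color. This gives $\chi'_s(T)\geq a+b+1=n_1(T)+1$.

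For the matching upper bound I would exhibit an explicit coloring: assign colors $1,\dots,a$ to the leaf edges at $u$, colors $a+1,\dots,a+b$ to the leaf edges at $v$, and color $a+b+1$ to $uv$. This coloring is proper; the leaves receive pairwise distinct singleton color sets, while $u$ and $v$ carry distinct color sets each of size at least two, so every pair of vertices is distinguished. Therefore $\chi'_s(T)\leq n_1(T)+1$, and equality holds.

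The equitable statement is then immediate from the edge count. Since $|E(T)|=n_1(T)+1$ equals the number of colors used in the construction, each of the $n_1(T)+1$ colors appears exactly once, so all color classes have size one and the coloring is trivially equitable. Combined with the general inequality $\chi'_s(T)\leq\chi'_{es}(T)$, this yields $\chi'_{es}(T)=\chi'_s(T)$. The only step needing genuine care is the structural identification of diameter-three trees as double stars; once that is established, the remaining argument is just the counting above, so I expect no real obstacle here.
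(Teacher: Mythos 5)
Your proposal is correct and follows essentially the same route as the paper: identify the diameter-three tree as a double star, force $n_1(T)$ distinct colors on the leaf edges plus one more on the central edge by properness, and exhibit the same explicit coloring in which every color class is a single edge, giving the equitable claim for free. No substantive difference from the paper's argument.
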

\begin{proof} Let two complete graphs $K_{1,m}$ and $K_{1,n}$ have their own vertex and edge sets as follows: $V(K_{1,m})=\{s,s_1,s_2,\dots,s_m\}$ and $E(K_{1,m})=\{ss_1,ss_2,\dots,ss_m\}$, $V(K_{1,n})=\{t,t_1,t_2,\dots,t_n\}$ and $E(K_{1,n})=\{tt_1,tt_2,\dots,tt_n\}$, where $s,t$ are the centers of $K_{1,m}$ and $K_{1,n}$. Each tree of diameter 3, denoted as $S_{m+1,n+1}$, can be obtained by joining two centers of $K_{1,m}$ and $K_{1,n}$ with an edge. So,  $S_{m+1,n+1}$ has its own vertex set $V(S_{m+1,n+1})=V(K_{1,m})\cup V(K_{1,n})$ and edge set $E(S_{m+1,n+1})=E(K_{1,m})\cup E(K_{1,n})\cup \{st\}$. Suppose that $f$ is a $k$-vdec of $S_{m+1,n+1}$ such that $k=\chi\,'_{s}(S_{m+1,n+1})$. Let $C(u,f)$ be the set of the colors assigned to the edges incident to $u$ of $S_{m+1,n+1}$. Since $C(s_i,f)\neq C(s_j,f)$, $C(s_i,f)\neq C(t_l,f)$, $C(t_i,f)\neq C(t_j,f)$, and $C(s,f)\neq C(t,f)$, so we know $k\geq m+n+1=n_1(S_{m+1,n+1})+1$. This $k$-vdec $f$ can be exactly defined as: $f(ss_i)=i$ for $i\in [1,m]$; $f(st)=m+1$; $f(tt_j)=m+1+j$ for $j\in [1,n]$. Thereby, $k=m+n+1$. By the definition of the $k$-vdec $f$, we are not hard to see $\chi\,'_{es}(T)=\chi\,'_{s}(T)$.
\end{proof}

\begin{lem} \label{them:lemma-bb}
Let $U=\{Q(0,2,0)$, $Q(r,2,0)$, $Q(0,1,1)\}$. For all trees $Q=Q(r,m,n)$ of diameter four we have $\chi\,'_s(Q)=n_1(Q)+1$ if $Q\in U\setminus \{Q(0,2,0)\}$; $\chi\,'_s(Q)=n_1(Q)+2$ if $Q=Q(0,2,0)$; and $\chi\,'_s(Q)=n_1(Q)$ for $Q\not \in U$. Furthermore,  $\chi\,'_{es}(Q)=\chi\,'_{s}(Q)$.
\end{lem}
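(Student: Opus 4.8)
The plan is to lean on the one structural fact that drives everything: the $n_1=n_1(Q)=r+m+\sum_{i=1}^{n}r_i$ leaf edges must receive pairwise distinct colours, since a leaf has a one-element colour set and two leaves are distinguished only if their edges differ. This already gives the universal lower bound $\chi\,'_s(Q)\ge n_1$. The sharper observation I would record is that in \emph{any} $n_1$-vdec the $n_1$ leaf edges realise a bijection onto $[1,n_1]$, so every vertex of degree exactly $n_1$ is forced to carry the full colour set $[1,n_1]$. I would then list the relevant degrees: $\ud(w_0)=r+m+n$, which equals $n_1$ precisely when $n=0$; $\ud(t_i)=r_i+1$, which equals $n_1$ only for $Q(0,1,1)$; and $\ud(s_i)=2$. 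These are exactly the configurations that produce the exceptional values.

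Using this forcing I would settle the three lower bounds attached to $U$. For $Q(r,2,0)$ the centre is forced to have $C(w_0)=[1,n_1]$, so the colours on $w_0s_1,w_0s_2$ and those on $s_1s'_1,s_2s'_2$ are two descriptions of the same two-element complement of the $w_0w_i$-colours, and a direct check then yields $C(s_1)=C(s_2)$, whence $\chi\,'_s>n_1$. For $Q(0,1,1)$, $\ud(t_1)=n_1$ forces $C(t_1)=[1,n_1]$, which pins $f(w_0t_1)$ to the colour of the leaf $s'_1$ and gives $C(w_0)=C(s_1)$; again $\chi\,'_s>n_1$. For $P_5=Q(0,2,0)$ the three interior two-sets must be distinct two-subsets of the colour set while the two leaf edges must also differ, and checking that this is impossible with $n_1+1=3$ colours gives $\chi\,'_s\ge n_1+2$. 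Matching these with explicit small colourings (the rainbow colouring of $P_5$, and one-extra-colour colourings of $Q(r,2,0)$ and $Q(0,1,1)$) closes the $U$-members.

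The heart is the upper bound $\chi\,'_s(Q)=n_1$ for $Q\notin U$. I would colour the leaf edges bijectively with $[1,n_1]$, giving the $r_i\ge2$ leaves below each $t_i$ a block $B_i$; since these blocks are disjoint and nontrivial, the sets $C(t_i)=\{f(w_0t_i)\}\cup B_i$ are automatically pairwise distinct and, by cardinality, distinct from every $C(s_j)$. The genuine constraints that remain are that the $m$ two-element sets $C(s_i)=\{f(w_0s_i),f(s_is'_i)\}$ be pairwise distinct, that $C(w_0)\ne C(t_i)$ for all $i$, and—only when $\ud(w_0)=2$—that $C(w_0)$ differ from the $C(s_i)$. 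To separate the $s_i$ I would route the interior colours $f(w_0s_i)$ outside the set $S$ of $s$-leaf colours whenever there is room (that is, when $m\le r+\sum r_i$), so each $C(s_i)$ meets $S$ in exactly the point $f(s_is'_i)$ and distinctness is immediate; in the dense case I would instead let the $f(w_0s_i)$ run through $S$ along a permutation all of whose cycles have length at least $3$, which exists exactly because $m\ge3$ once we are outside $U$ (this is precisely where the anomalous $m=2$ would break the argument). I would then choose the $f(w_0t_i)$ so as not to drive all remaining $w_0$-edges into a single block $B_i$, keeping $C(w_0)\ne C(t_i)$; the only surviving small case with $\ud(w_0)=2$ is $Q(0,0,2)$, which has no $s$-vertices and is immediate.

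Equitability then comes for free on these colourings: every colour is used exactly once by its leaf edge and, since all interior edges are incident to $w_0$ and hence coloured distinctly, at most once more, so each colour class has size $1$ or $2$ and the colouring is automatically equitable; for the three $U$-members one checks directly that the exhibited optimal colourings are equitable as well. I expect the main obstacle to be the third paragraph: organising the construction so that pairwise distinctness of the degree-two sets $C(s_i)$ is reconciled with $C(w_0)\ne C(t_i)$, especially in the dense regime and when $\ud(w_0)$ is small, where the interior edges at $w_0$ leave little freedom and force the careful, case-dependent routing sketched above.
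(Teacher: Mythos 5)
Your proposal is correct and reaches all four conclusions, but it is organized quite differently from the paper's proof, and the comparison is worth recording. The paper proceeds by exhaustive case analysis on the parameter triple $(r,m,n)$ (Cases A--D with subcases), writing down an explicit colouring in each main subcase and deferring the remaining ones to ``the methods/techniques above''; your version replaces this with a single unified construction (leaf edges bijective onto $[1,n_1]$, then a dichotomy for the interior $s$-edges: route them outside the $s$-leaf colour block when possible, otherwise permute within it using a permutation with all cycles of length at least $3$, which exists exactly when $m\ge 3$). This buys a cleaner explanation of \emph{why} $U$ is the exceptional set --- the $m=2$, $n=0$ trees are precisely where both routing options fail, and your ``forcing'' observation (a degree-$n_1$ vertex must see all $n_1$ colours in any $n_1$-vdec) derives the lower bounds for $Q(r,2,0)$ and $Q(0,1,1)$ from one principle, where the paper argues each ad hoc. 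One small slip: the threshold for the ``route outside $S$'' option should be $m\le \sum_i r_i$ rather than $m\le r+\sum_i r_i$, since the $r$ colours on the leaf edges $w_0w_j$ are already present at $w_0$ and cannot be reused on $w_0s_i$; this is harmless because whenever $\sum_i r_i<m\le r+\sum_i r_i$ one checks that either $m\ge 3$ (so the permutation fallback applies) or $Q\in U$ or $D(Q)<4$. Your equitability argument (each colour used once on a leaf edge and at most once more at $w_0$) is the same as the paper's closing remark.
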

\begin{proof} Using the description of a tree $Q=Q(r,m,n)$ of diameter four in Section 1. Let $n_1=n_1(Q)$, and $f$ be a $k$-vdec of $Q$ such that $k=\chi\,'_s(Q)$. Note that $D(Q)=4$.

\emph{Case A.} $(r,m,n)=(r,m,0)$ with $m\geq 2$ and $r\geq 0$. Clearly, $n_1=r+m$.

\emph{Case A1.} $(r,m,0)=(r,2,0)$, $n_1=r+2$. We can easily see $\chi\,'_s(Q)=n_1+1$ for $r\geq 0$, since we have to color one of two edges $w_0s_1$ and $w_0s_2$ with one color that is not in $\{f(w_0w_s),f(s_is\, '_i):s\in [1,r], i\in [1,2]\}$. As $r=0$, $Q$ is a path with $4$ vertices, so $\chi\,'_s(Q)=n_1+2=4$.

\emph{Case A2.} $(r,m,0)$ with $m\geq 3$, $n_1=r+m$. We show $f$ in the following: $f(s_is\, '_i)=i$ for $i\in [1,m]$; $f(w_0s_i)=i+1$ for $i\in [1,m-1]$, and $f(w_0s_m)=1$; $f(w_0w_j)=m+j$ for $j\in [1,r]$ if $r\neq 0$. Clearly, $C(u,f)\neq C(v,f)$ for any two vertices $u,v\in V(Q)$, which means  $\chi\,'_s(Q)=n_1$.

\emph{Case B.} $(r,0,n)$ with $n\geq 2$, $n_1=r+\sum^n_{i=1}r_i$. It is easy to show $\chi\,'_s(Q)=n_1$ in this case, since $n_2\leq 1$.

\emph{Case C.} $(0,m,n)$ with $m\geq 1$ and $n\geq 1$, $n_1=m+\sum^n_{i=1}r_i$.

\emph{Case C1.} $(0,m,n)=(0,1,1)$, $n_1=1+r_1$. Since $f(s_1s\, '_1)=f(w_0t_1)$ if $\chi\,'_s(Q)=n_1$ in this case, we can see $C(w_0,f)=C(s_1,f)$; a contradiction. So $\chi\,'_s(Q(0,1,1))=2+r_1=n_1+1$.

\emph{Case C1.} $(0,m,n)=(0,1,n)$ with $n\geq 2$, $n_1=1+\sum^n_{i=1}r_i$. To show $\chi\,'_s(Q)=n_1$, we have $f$ defined as: $f(s_1s\, '_1)=1$; $f(t_1t\, '_{1,j})=1+j$ for $j\in [1,r_1]$; $f(t_it\, '_{i,j})=f(t_{i-1}t\, '_{i-1,j})+j$ for $j\in [1,r_i]$ and $i\in [2,n]$; $f(w_0s_1)=f(t_{n}t\, '_{n,1})$; $f(w_0t_j)=f(t_{j+1}t\, '_{j+1,1})$ for $j\in [1,n-1]$; $f(w_0t_n)=f(s_1s\, '_1)$.

\emph{Case C2.} $(0,m,n)=(0,m,1)$ with $m\geq 2$, $n_1=m+r_1$. For showing $\chi\,'_s(Q)=n_1$ we define: $f(s_is\, '_i)=i$ for $i\in [1,m]$; $f(w_0s_i)=i+1$ for $i\in [1,m-1]$, and $f(w_0s_m)=f(t_1t\, '_{1,1})$; $f(t_1t\, '_{1,j})=m+j$ for $j\in [1,r_1]$ and $f(w_0t_1)=m$.

\emph{Case C3.} $(0,m,n)=(0,m,n)$ with $m\geq 2$ and $n\geq 1$, $n_1=m+\sum^n_{i=1}r_i$. We have $\chi\,'_s(Q)=n_1$ by defining $f$ appropriately by the methods showing in Case C1 and Case C2.

\emph{Case D.} $(r,m,n)$ with $r\geq 1$, $m\geq 1$ and $n\geq 1$, $n_1=r+m+\sum^n_{i=1}r_i$. By the techniques use in the above cases, we can define $f$ to show $\chi\,'_s(Q)=n_1$.

Through the above Cases A, B, C and D, we conclude $\chi\,'_{es}(Q)=\chi\,'_{s}(Q)$, since one color is used at most twice under the vdec $f$. The lemma is covered.
\end{proof}

\textbf{The proof of Theorem \ref{them:main-theorem}.} Let $n_i=n_i(T)$ for $i=1,2$, and neighborhoods $N(w)=N_T(w)$ for $w\in V(T)$. Because $T=K_{1,n-1}$ when $D(T)=2$, so $\chi\,'_{s}(T)=n_1=n-1$. For $D(T)=3,4$, by Lemmas \ref{them:lemma-aa} and \ref{them:lemma-bb}, the theorem holds true. So, we show $\chi\,'_{s}(T)=n_1$ by induction on vertex numbers of trees $T$ with diameter $D(T)\geq 5$. We will use the description of trees having diameter four in the following discussion.

\emph{Case 1.} There exists a leaf $v$ having a neighbor $u$ with degree $\ud_T(u)\geq 4$. Let $T\,'=T-v$, so $n_1(T\,')=n_1-1$. Clearly, $D(T\,')\geq 5$.

\emph{Case 1.1.} $n_2(T\,')\leq n_1(T\,')$. Then by induction hypothesis, there is an edge coloring $\xi_{T\,'}:E(T\,')\rightarrow \{1,2,\dots,b\,'\}$ such that $\chi\,'_s(T\,')=b\,'=n_1(T\,')=n_1-1$. It is straightforward to define a vdec $\xi_{T}$: $E(T)\rightarrow \{1,2,\dots,b\,', b\,'+1\}$ such that $\xi_{T}(e)=\xi_{T\,'}(e)$ for $e\in E(T)\setminus \{uv\}$, and $\xi_{T}(uv)=b\,'+1$.

\emph{Case 1.2.} $n_1(T\,')=n_2(T\,')-1$. By Lemma \ref{them:lemma-11}, there exists a $2$-degree vertex $x\in V(T\,')$ with $N_{T\,'}(x)=\{x_1,x_2\}$ such that $\ud_{T}(x_1)=\ud_{T\,'}(x_1)\leq 2$. Notice that $\ud_{T\,'}(x)=\ud_{T}(x)$ and $\ud_{T\,'}(x_1)=\ud_{T}(x_1)$. Let $T_1$ be the tree obtained from $T\,'$ by suppressing the vertex $x$, that is, $T_1=T\,'-x+x_1x_2$. Clearly, $D(T_1)\geq 4$, $n_1(T_1)=n_1(T\,')$ and $n_2(T_1)=n_2(T\,')-1$.

If $D(T_1)=4$, we have $T_1=Q(0,m,0)$ with $m\geq 3$, since $n_1(T_1)=n_2(T_1)$ and  $\ud_T(u)\geq 4$. Thus, $u$ is the center of $T_1=Q(0,m,0)$, the leaf $v$ is the leaf $w_1$ of $Q(1,m,0)$, and $T$ is the tree obtained by subdivision of the edge $s_1s\, '_1$. It is not difficult to see $\chi\,'_{s}(Q(1,m,0))=n_1(Q(1,m,0))$, so $\chi\,'_{s}(T)=n_1$.

If $D(T_1)\geq 5$, again by induction hypothesis, there is a vdec $\xi_{T_1}:~E(T_1)\rightarrow \{1,2,\dots,b_1\}$ such that $\chi\,'_s(T_1)=b_1=n_1(T_1)=n_1-1$. Hence, $T$ has a proper edge coloring $\xi_{T}$ defined as: $\xi_{T}(e)=\xi_{T_1}(e)$ for $e\in E(T)\setminus \{uv, xx_1,xx_2\}$, and $\xi_{T}(uv)=b_1+1$, $\xi_{T}(xx_1)=b_1+1$ and $\xi_{T}(xx_2)=\xi_{T_1}(x_1x_2)$. It is easy to check $\xi_{T}$ to be a desired $n_1$-vdec of $T$, since $C(\xi_{T},u)\neq C(\xi_{T}, x_1)$.

\emph{Case 2.} There is a leaf $v$ having a 3-degree neighbor $u$, and Case 1 is false.

\emph{Case 2.1.} $v\,'$ is another leaf in the neighborhood $N(u)=\{v,v\,',u\,'\}$, and $T$ has a $2$-degree vertex $x$ having its neighborhood $\{x_1,x_2\}$ such that $x_1$ is a leaf of $T$.

We have a tree $T_1=T-\{v,v\,',x\}+x_1x_2$. Clearly, $D(T_1)\geq 3$ since $D(T)\geq 5$, $n_1(T_1)=n_1-1$ and $n_2(T_1)=n_2-1$.

If $D(T_1)=3$, $T_1$ is a $3$-diameter tree $S_{m+1,n+1}$ (we use the description shown in the proof of Lemma \ref{them:lemma-aa}) obtained by joining two centers of $K_{1,m}$ and $K_{1,n}$ with an edge. Since $D(T)\geq 5$, so $T$ can be obtained by subdividing the edge $ss_1$ ($=x_1x_2$) to form a path $sxs_1=x_1xx_2$ of $T$ and joining $t_1$ ($=u$) to two new vertices $v$ and $v\, '$ to $S_{m+1,n+1}$. It is not hard to make a desired $n_1$-vdec of $T$ based on the structure of $T$.

If $D(T_1)=4$, then $T_1=Q(r,m,n)$. If $T_1=Q(0,2,0)$, thus it goes to $T$ such that $n_1=3$ and $n_2=4$; a contradiction. As $T_1=Q(r,m,n)\neq Q(0,2,0)$, we can show $\chi\,'_{s}(T)=n_1$ by the edge colorings used in the proof of Lemma \ref{them:lemma-bb}.

For $D(T_1)\geq 5$, $T_1$, by induction hypothesis, has a vdec $\xi_{T_1}:~E(T_1)\rightarrow \{1,2,\dots,b_1\}$ such that $\chi\,'_s(T_1)=b_1=n_1(T_1)=n_1-1$. Notice that $u$ is a leaf of $T_1$. So, we can extend $\xi_{T_1}$ to a vdec $\xi_{T}$ of $T$ as follows: $\xi_{T}(e)=\xi_{T_1}(e)$ for $e\in E(T)\setminus \{uv, uv\,',xx_1,xx_2\}$; if $\xi_{T_1}(x_1x_2)\not \in C(\xi_{T_1},u\,')$, we set $\xi_{T}(uu\,')=b_1+1$, $\xi_{T}(uv\,')=\xi_{T_1}(uu\,')$, $\xi_{T}(uv)=\xi_{T_1}(x_1x_2)$, $\xi_{T}(xx_2)=\xi_{T_1}(x_1x_2)$ and $\xi_{T}(xx_1)=b_1+1$; if  $\xi_{T_1}(x_1x_2)\in C(\xi_{T_1},u\,')$, we define $\xi_{T}(uu\,')=\xi_{T_1}(uu\,')$, $\xi_{T}(uv)=\xi_{T_1}(x_1x_2)$, $\xi_{T}(uv\,')=b_1+1$, $\xi_{T}(xx_2)=b_1+1$  and $\xi_{T}(xx_1)=\xi_{T_1}(uu\,')$.

\emph{Case 2.2.} $\ud_T(v\,')=1$ for $v\,'\in N(u)=\{v,v\,',u\,'\}$, and $T$ has no a $2$-degree vertex that is adjacent to a leaf of $T$. Let $P=p_1p_2\cdots p_{D-1}p_D$ be a longest path of $T$, where $D$ is the diameter of $T$. Here, $\ud_T(p_2)=3=\ud_T(p_{D-1})$.

Without loss of generality, $p_2\neq u$, so $p_2$ is adjacent to two leaves $p_1,p\,'_1\in N(p_2)=\{p_1,p\,'_1,p_3\}$ with $\ud_T(p_3)\neq 1$. Suppose that $x,y$ are two $2$-degree vertices of $T$. Let $N(x)=\{x_1,x_2\}$ and $N(y)=\{y_1,y_2\}$, then we know that $\ud_T(x_i)\geq 2$ and $\ud_T(y_i)\geq 2$ for $i=1,2$ by the hypothesis of Case 2.2. Make a tree $T_2=T-\{v,v\,',p_1,p\,'_1,x,y\}+\{x_1x_2,y_1y_2\}$ with $n_1(T_2)=n_1-2$ and $n_2(T_2)=n_2-2$.

If $D(T_2)=2$, so $T_2=K_{1,m}$ with $V(K_{1,m})=\{s,s_1,s_2,\dots,s_m\}$ and $E(K_{1,m})=\{ss_1,ss_2,\dots,ss_m\}$ for $m\geq 2$. Without loss of generality, $u=s_1$, $p_2=s_2$, $ss_1=x_1x_2$ and $ss_j=y_1y_2$ for $j\neq 1$ since $D(T)\geq 5$. So, $D(T)=5$ or $6$, $n_1\geq 4$ and $n_2=2$. The structure of $T$ enables us to show $\chi\,'_{s}(T)=n_1$ by defining a desired $n_1$-vdec of $T$.

Consider the case $D(T_2)=3$. As $T_2=P_4$, $T$ is obtained by replacing by a path $w_0xx_1yt_1$ the edge $w_0t_1$ of $Q(0,0,2)$ with $r_1=r_2=2$ ($t_2=u$, $t_1=p_2$), so $n_1=n_2=4$. It is easy to define a desired $4$-vdec of $T$ by means of the techniques used in the proof of Lemma  \ref{them:lemma-bb}. When $T_2=S_{m+1,n+1}$ with $m+n\geq 3$, the structure of $T$ is clear such that we can show easily $\chi\,'_{s}(T)=n_1$.

For $D(T_2)=4$, thus $T_2=Q(r,0,n)$ with $0\leq r\leq 1$ and $n\geq 2$ according to $T$ has no a $2$-degree vertex that is adjacent to a leaf of $T$. We have two subcases $T_2=Q(1,0,2)$ and $T_2=Q(0,0,n)$ with $n\geq 3$. So, $x_1x_2=w_0t_1$ and $y_1y_2=w_0t_2$, and $u$ and $p_2$ both are the leaves of $T_2=Q(1,0,2)$ or $T_2=Q(0,0,n)$ with $n\geq 3$. We claim $\chi\,'_{s}(T)=n_1$ by the structure of $T$ in this situation.

When $D(T_2)\geq 5$, by induction hypothesis, we take a vdec $\xi_{T_2}:~E(T_2)\rightarrow \{1,2,\dots,b_2\}$ having $\chi\,'_s(T_2)=b_2=n_1(T_2)=n_1-2$, and extend $\xi_{T_2}$ to a proper edge coloring $\xi_{T}$ of $T$ by setting $\xi_{T}(e)=\xi_{T_2}(e)$ for $e\in E(T)\setminus \{uv, uv\,',uu\,',xx_1,xx_2, p_2p_1,p_2p\,'_1,p_2p_3,yy_1,yy_2\}$, and $\xi_{T}(uu\,')=b_2+2$, $\xi_{T}(uv\,')=\xi_{T_2}(uu\,')$, $\xi_{T}(uv)=b_2+1$, $\xi_{T}(xx_2)=\xi_{T_2}(x_1x_2)$, and $\xi_{T}(xx_1)=b_2+1$; $\xi_{T}(p_2p_3)=b_2+1$, $\xi_{T}(p_2p\,'_1)=\xi_{T_2}(p_2p_3)$, $\xi_{T}(p_2p_1)=b_2+2$, $\xi_{T}(yy_2)=\xi_{T_2}(y_1y_2)$, and $\xi_{T}(yy_1)=b_2+2$. Since $C(\xi_{T_2},p_3)\setminus \{\xi_{T_2}(p_2p_3)\}$ is not equal to one of $C(\xi_{T_2},x_i)\setminus \{\xi_{T_2}(x_1x_2)\}$ for $i=1,2$, and $C(\xi_{T_2},u)\setminus \{\xi_{T_2}(uu\,')\}$ is not equal to one of $C(\xi_{T_2},y_i)\setminus \{\xi_{T_2}(y_1y_2)\}$ for $i=1,2$, so $C(\xi_{T},p_3)\neq C(\xi_{T},x_1)$ and $C(\xi_{T},u\,')\neq C(\xi_{T},y_1)$, without loss of generality. Hence, it is not hard to verify that $\xi_{T}$ is a vdec of $T$ such that $\chi\,'_s(T)= b_2+2=n_1$.

\emph{Case 2.3.} The above Case 2.1 and Case 2.2 are false, simultaneously. So, any leaf of $T$ is either adjacent to a $2$-degree vertex or a 3-degree vertex having two neighbors of non-leaves. Thereby, $\ud_T(v)=1$, $\ud_T(v\,')\geq 2$ and $\ud_T(u\,')\geq 2$ in $N(u)=\{v,v\,',u\,'\}$. Let $P=x_1x_2\cdots x_{D-1}x_D$ be a longest path of $T$ with $\ud_T(x_2)=2$ or $\ud_T(x_{D-1})=2$, where $D=D(T)$. Without loss of generality, $x_2$ has its two neighbors $x_1$ and $x_3$ holding $\ud_T(x_1)=1$ and $\ud_T(x_3)\geq 2$.

We have a tree $T_3=T-\{v,u,x_2\}+\{x_1x_3,v\,'u\,'\}$. Clearly, $D(T_3)\geq 3$ since $D(T)\geq 5$, $n_1(T_3)=n_1-1$ and $n_2(T_3)=n_2-1$. If $D(T_3)=3$, the possible case is $T_3=P_4=x_1x_3x_4x_5=x_1u\, 'v\, 'x_5$, which implies $T=x_1x_2u\, 'uv\, 'x_5+uv$. Thereby, we have $\chi\,'_{s}(T)=n_1$. If $D(T_3)=4$, then  $T_3=Q(1,2,0)$ or $T_3=Q(0,m,0)$ with $m\geq 3$ ($T_3=Q(0,2,0)$ will induce $T$ has $n_1=3$ and $n_2=4$; a contradiction). Therefore, $s_1=u\,'$, $w_0=v\,'$, and $x_1$ is a leaf of $T_3=Q(1,2,0)$ or $T_3=Q(0,m,0)$ with $m\geq 3$. The structure of $T$ is clear when $D(T_3)=4$, so we can show a $n_1$-vdec  of $T$ by the methods used in the proof of Lemma  \ref{them:lemma-bb}.

For $D(T_3)\geq 5$, by induction hypothesis, $T_3$ admits a vdec $\xi_{T_3}:~E(T_3)\rightarrow \{1,2,\dots,b_3\}$ such that $\chi\,'_s(T_3)=b_3=n_1(T_3)=n_1-1$. It is straightforward to define a $n_1$-vdec $\xi_{T}$ of $T$ as follows: $\xi_{T}(e)=\xi_{T_3}(e)$ for $e\in E(T)\setminus \{uv, uv\,',uu\,',x_2x_1,x_2x_3\}$, and $\xi_{T}(uu\,')=b_3+1$ if $\xi_{T_3}(x_1x_3)\not \in C(\xi_{T_3},u\,')$, $\xi_{T}(uv\,')=\xi_{T_3}(u\,'v\,')$, $\xi_{T}(uv)=\xi_{T_3}(x_1x_3)$, $\xi_{T}(x_2x_3)=\xi_{T_3}(x_1x_3)$, and $\xi_{T}(x_2x_1)=b_3+1$.

\emph{Case 3.} The above Case 1 and Case 2 do not appear simultaneously, and every leaf is adjacent to a $2$-degree vertex in $T$, which means $n_2=n_1$.

\emph{Case 3.1.} Each $2$-degree vertex $x$ has its neighborhood $N(x)=\{x_1,x_2\}$ such that $\ud_T(x_1)=1$ and $\ud_T(x_2)\geq 3$, and no two $2$-degree vertices have a common neighbor. Let $R=p_1p_2\cdots p_l$ be a longest path in $T$, where $l=D(T)$. Thereby, $\ud_T(p_3)\geq 3$ and $\ud_T(p_{l-2})\geq 3$ since  $n_2=n_1$. Notice that $\ud_T(x)\leq 2$ for $x\in N(p_3)\setminus \{p_2,p_4\}$ (resp. $x\in N(p_{l-2})\setminus \{p_{l-3},p_{l-1}\}$) because $x\not \in V(R)$, and furthermore this vertex $x$ is neither a leaf (Case 2 has been assumed to disappear) nor a $2$-degree vertex (by the hypothesis of this subcase and $n_2=n_1$). We claim that this subcase does not exist.

\emph{Case 3.2.} Case 3.1 does not exist at all. We have a subgraph $H$ with $V(H)=\{w,u\}\cup V\,'$ with $m\geq 2$ and $V\,'=\{y_i,x_i:i\in [1,m]\}$ and $E(H)=\{wu, uy_i, y_ix_i: i\in [1,m]\}$, where $\ud_T(w)\geq 2$, every $y_i$ is a $2$-degree vertex and every $x_i$ is a leaf for $i\in [1,m]$. Then we have a tree $T_1=T-V\,'$ such that  $D(T_1)\geq 3$ because $D(T)\geq 5$.

If $D(T_1)=3$, the possible structure is $T_1=P_4$, but, which implies $n_2=n_1+1$; a contradiction.

For $D(T_1)=4$, thus $T_1=Q(1,m,0)$ with $m\geq 2$ under the restriction of Case 3.2, and $s\, '_1=u$. The clear structure of $T$ enables us to show $\chi\,'_{s}(T)=n_1$.

For $D(T_1)\geq 5$, $n_1(T_1)=n_1-m+1> n_2(T_1)=n_2-m$ from $n_1=n_2$, and $u$ is a leaf of $T_1$. By induction hypothesis, $T_1$ admits a vdec $\xi_{T_1}:~E(T_1)\rightarrow S=\{1,2,\dots,b_1\}$ having $\chi\,'_s(T_1)=b_1= n_1(T_1)=n_1-(m-1)$. We can define a $n_1$-vdec $\xi_{T}$ of $T$ as follows: $\xi_{T}(e)=\xi_{T_1}(e)$ for $e\in E(T)\setminus E(H)$, and $\xi_{T}(wu)=\xi_{T_1}(wu)$; $\xi_{T}(uy_i)=b_1+i$ for $i\in [1,m-1]$, and $\xi_{T}(uy_m)\in S\setminus \{\xi_{T_1}(wu)\}$; $\xi_{T}(y_1x_1)=\xi_{T_1}(wu)$, $\xi_{T}(y_jx_j)=b_1+j-1$ for $j\in [2,m]$.

Thereby, Theorem \ref{them:main-theorem} follows from the principle of induction.\qqed

\vskip 0.2cm

\textbf{Proof of Theorem \ref{thm:GLT2-theorem-222}.} It is not hard to let the number of a color be one or two in the desired vdecs $\xi_{T}$ of $T$ in all cases of the proof of Theorem \ref{them:main-theorem}, since $|E(T)|\leq 2(n_1+1)$. Thereby, Theorem \ref{thm:GLT2-theorem-222} follows from the principle of induction.\qqed

\begin{cor} \label{thm:GLT2-111}
Let $G$ be a connected graph having cycles, $p$ vertices and $q$ edges. If $n_2(G)\leq 2(q-p+1)+n_1(G)$, then $\chi\,'_{s}(G)\leq 2(q-p+1)+n_1(G)$.
\end{cor}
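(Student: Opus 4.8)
The plan is to realize $G$ as a spanning tree with its $q-p+1$ surplus edges put back, and to transfer a vertex distinguishing coloring from the tree to $G$. Write $c=q-p+1$ for the cyclomatic number; since $G$ is connected and contains a cycle we have $c\geq 1$ and $p\geq 3$. Because every leaf of $G$ forces a private color on its unique incident edge, one has the baseline $\chi\,'_s(G)\geq n_1(G)$, so the content of the statement is that the $c$ independent cycles cost at most $2c$ additional colors beyond this baseline. The natural route is therefore to invoke Theorem~\ref{them:main-theorem} on a spanning tree and then account for the surplus edges.

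First I would delete a set $F$ of $c$ edges, one breaking each independent cycle, so that $T=G-F$ is a spanning tree, choosing $F$ so that $T$ meets the hypothesis $n_2(T)\leq n_1(T)$ of Theorem~\ref{them:main-theorem}. This is exactly where the assumption $n_2(G)\leq 2(q-p+1)+n_1(G)$ enters: deleting an edge of $F$ turns at most two vertices into new leaves and shifts others between degree classes, and the bound on $n_2(G)$ is what permits a choice of $F$ making $T$ a legitimate input for Theorem~\ref{them:main-theorem} while keeping $n_1(T)$ controlled. Theorem~\ref{them:main-theorem} then supplies a vdec $\phi$ of $T$ with $\chi\,'_s(T)$ equal to $n_1(T)$, or to $n_1(T)+1$ or $n_1(T)+2$ in the listed exceptional trees, and the count of newly created leaves should be arranged so that in every case $\chi\,'_s(T)\leq 2(q-p+1)+n_1(G)$.

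Next I would re-insert the $c$ edges of $F$ one at a time, extending $\phi$ to a proper edge coloring of $G$ that stays vertex distinguishing, crucially drawing colors from the same palette of $2(q-p+1)+n_1(G)$ colors. When an edge $e=uv$ of $F$ is restored, only the color sets $C(u,\phi)$ and $C(v,\phi)$ change, each gaining the color assigned to $e$; since $\phi$ already separates all vertices, it suffices to pick a color $\alpha\notin C(u,\phi)\cup C(v,\phi)$ (for properness) for which neither $C(u,\phi)\cup\{\alpha\}$ nor $C(v,\phi)\cup\{\alpha\}$ coincides with the color set of any third vertex. I would carry this out degree by degree, using that the endpoints of the edges of $F$ are precisely the vertices whose degree in $T$ fell below their degree in $G$.

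The \textbf{main obstacle} is precisely this re-insertion step under a tight palette. A naive extension giving each surplus edge its own fresh color overshoots the bound—on a six-cycle with one chord it would spend six colors although four suffice—so one is forced to reuse existing colors; but restoring a surplus edge can merge a former leaf back into the degree-two class and thereby destroy the distinctness so carefully produced in $\phi$. Coordinating the three choices, namely the set $F$, the tree coloring $\phi$, and the colors handed to $F$, so that all $p$ vertices remain separated within $2(q-p+1)+n_1(G)$ colors is the delicate heart of the argument, and it is governed throughout by the hypothesis $n_2(G)\leq 2(q-p+1)+n_1(G)$, which caps how many degree-two vertices, and hence how many potential collisions, must be resolved.
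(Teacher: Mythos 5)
Your proposal diverges from the paper's argument at the very first step, and the divergence is where the gap lies. You delete a set $F$ of $c=q-p+1$ edges to obtain a spanning tree $T=G-F$ and then hope to (a) choose $F$ so that $n_2(T)\leq n_1(T)$ and (b) re-insert the edges of $F$ within the same palette. Neither step is carried out. For (a), deleting an edge lowers the degree of both endpoints, so it can convert degree-$3$ vertices of $G$ into degree-$2$ vertices of $T$ and degree-$2$ vertices into leaves; consequently $n_2(G-F)$ can exceed $n_2(G)$ and $n_1(G-F)$ need not equal $n_1(G)+2c$, and the hypothesis $n_2(G)\leq 2(q-p+1)+n_1(G)$ gives you no control over $n_2(T)$ versus $n_1(T)$ for any particular choice of $F$. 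You assert that the hypothesis ``permits a choice of $F$'' but give no argument, and it is not clear one exists in general. For (b), you yourself flag the re-insertion as ``the delicate heart of the argument'' and leave it open: no mechanism is offered for why a color $\alpha\notin C(u,\phi)\cup C(v,\phi)$ avoiding all collisions with third vertices exists inside a palette of only $2(q-p+1)+n_1(G)$ colors. As written, the proposal is a plan with its two essential steps missing, not a proof.

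The paper sidesteps both difficulties with a different gadget: instead of deleting a non-tree edge $uv\in E\,'=E(G)\setminus E(H)$ (for a spanning tree $H$), it \emph{splits} it, attaching two new pendant vertices $u\,',v\,'$ to $u$ and $v$. The resulting tree $T$ preserves the degree of every original vertex, so $n_2(T)=n_2(G)$ exactly and $n_1(T)=n_1(G)+2(q-p+1)$ exactly; the corollary's hypothesis then reads precisely $n_2(T)\leq n_1(T)$, which is the hypothesis of Theorem~\ref{them:main-theorem}, and that theorem bounds $\chi\,'_s(T)$ by $n_1(T)=2(q-p+1)+n_1(G)$ (outside the small exceptional trees). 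The correspondence between colorings of $G$ and of $T$ is then handled by setting $\pi\,'(uu\,')=\pi\,'(vv\,')=\pi(uv)$, i.e.\ the two pendant edges replacing $uv$ carry a common color. If you want to rescue your approach, the degree-preserving split is the idea you need; the deletion-and-reinsertion route would require solving exactly the coordination problem you identified but did not solve.
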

\begin{proof} Let $H$ be a spanning tree of $G$,
so $E\,'=E(G)\setminus E(H)$. Hence, we have another tree $T$
obtained by deleting every edge $uv\in E\,'$, and then adding two
new vertices $u\,',v\,'$ by joining $u\,'$ with $u$ and $v\,'$ with
$v$ simultaneously. Clearly, $n_1(T)=2(q-p+1)+n_1(G)$, and $n_2(T)=n_2(G)$ and
$\Delta(T)=\Delta(G)$. On the other hand, each $k$-vdec $\pi$
of $G$ with $k=\chi\,'_{s}(G)$ corresponds to an edge coloring $\pi\,'$ of $T$ such that
for any two distinct
non-leaf vertices $u$ and $v$ of $T$, the set of the colors assigned to the
edges incident to $u$ differs from the set of the colors assigned to
the edges incident to $v$.  Also, $\pi\,'$ is a
proper edge coloring of $T$ by setting
$\pi\,'(uu\,')=\pi\,'(vv\,')=\pi(uv)$ for $uu\,',vv\,'\in E(T)$
and $uv\in E\,'$; and $\pi\,'(e)=\pi(e)$ for $e\in E(H)\subset
E(T)$. Thereby, $k\leq
\chi\,'_{s}(T)$ since $k=\chi\,'_{s}(G)$. This corollary follows by Theorem \ref{them:main-theorem}.
\end{proof}

\begin{cor} \label{thm:GLT2-222}
Let $T$ be a spanning tree of a connected graph $G$, and $G[E\,']$ be an induced graph over the edge subset $E\,'=E(G)\setminus E(T)$. Then $\chi\,'_{s}(G)\leq \chi\,'_{s}(T)+\chi\,'(G[E\,'])$, where $\chi\,'(H)$ is the chromatic index of a graph $H$.
\end{cor}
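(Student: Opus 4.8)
The plan is to build a vdec of $G$ by overlaying a vdec of the spanning tree $T$ and a proper edge coloring of $G[E\,']$ on two disjoint color palettes. Set $a=\chi\,'_s(T)$ and $b=\chi\,'(G[E\,'])$. First I would fix a vdec $\phi$ of $T$ using colors from $[1,a]$, and a proper edge coloring $\psi$ of $G[E\,']$ using colors from $[a+1,a+b]$ (shifting $\psi$'s palette up by $a$ so that the two palettes are disjoint). Define a single coloring $\Phi$ of $G$ by $\Phi(e)=\phi(e)$ for $e\in E(T)$ and $\Phi(e)=\psi(e)$ for $e\in E\,'$. This uses exactly $a+b$ colors, so once $\Phi$ is shown to be a vdec the corollary follows.

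Next I would verify that $\Phi$ is a proper edge coloring. At any vertex $w$, the edges of $T$ incident to $w$ receive pairwise distinct colors because $\phi$ is proper, the edges of $E\,'$ incident to $w$ receive pairwise distinct colors because $\psi$ is proper, and no $T$-edge can clash with an $E\,'$-edge at $w$ since their colors lie in the disjoint ranges $[1,a]$ and $[a+1,a+b]$, respectively. Hence $\Phi$ is proper.

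The crucial step is the vertex-distinguishing property, and this is where palette disjointness does the work. Writing $C(w,\Phi)$ for the set of colors assigned to edges incident to $w$ under $\Phi$, the restriction to the low palette recovers exactly the $T$-color set, i.e. $C(w,\Phi)\cap[1,a]=C(w,\phi)$ for every vertex $w$. Because $T$ is a spanning tree of the connected graph $G$, we have $V(T)=V(G)$ and every vertex of $G$ meets at least one edge of $T$, so $C(w,\phi)$ is genuinely the color set used by the vdec $\phi$ at each vertex of $G$. Thus if distinct $u,v$ had $C(u,\Phi)=C(v,\Phi)$, then intersecting both sides with $[1,a]$ would give $C(u,\phi)=C(v,\phi)$, contradicting that $\phi$ is a vdec of $T$. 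Therefore $C(u,\Phi)\neq C(v,\Phi)$ for all distinct $u,v$, so $\Phi$ is a vdec of $G$ and $\chi\,'_s(G)\leq a+b=\chi\,'_s(T)+\chi\,'(G[E\,'])$.

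I do not expect a genuine obstacle here: the only points needing care are to keep the two palettes disjoint, so that intersecting $C(w,\Phi)$ with $[1,a]$ isolates precisely the $T$-contribution, and to note that the spanningness of $T$ guarantees every vertex is already distinguished by its $T$-edges. The $E\,'$-edges merely enlarge color sets and can never merge two previously distinct ones, which is exactly why adjoining them costs at most $\chi\,'(G[E\,'])$ extra colors.
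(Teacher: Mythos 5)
Your proof is correct. Note that the paper states this corollary without supplying any proof at all, so there is nothing to compare against; your disjoint-palette argument (color $T$ with a vdec on $[1,a]$, color $G[E\,']$ properly on $[a+1,a+b]$, and observe that intersecting each vertex's color set with $[1,a]$ recovers its $T$-palette, which already distinguishes all vertices since $T$ is spanning) is the natural justification and is complete: properness follows from palette disjointness together with the properness of the two constituent colorings, and the distinguishing property is inherited from $\phi$ because adding high-palette colors can never merge two distinct low-palette sets.
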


As further work we present the following

\begin{conj} \label{conj:shan-yybb-chen}
Let $T$ be a tree with $2n_2(T)\leq (n_1(T)+k-1)^2$ for $k\geq 1$. Then $\chi \,'_s(T)\leq n_1(T)+k$.
\end{conj}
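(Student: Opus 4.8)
The plan is to show that the hypothesis $2n_2(T)\le (n_1(T)+k-1)^2$ is exactly the counting condition that makes a $K$-vdec combinatorially feasible for $K=n_1(T)+k$, and then to realize it on trees. Writing $n_i=n_i(T)$ and $K=n_1+k$, I would first record that all the necessary counting inequalities hold: the leaves need distinct singletons and $K\ge n_1+1$; the degree-$2$ vertices need distinct pairs and $\binom{K}{2}=\tfrac{1}{2}K(K-1)\ge\tfrac12(K-1)^2=\tfrac12(n_1+k-1)^2\ge n_2$ by hypothesis; and for $d\ge 3$ the identity $n_1=2+\sum_{d\ge3}(d-2)n_d$ from \cite{Yao-Zhang-Wang2010} gives $n_d\le n_1-2<K\le\binom{K}{d}$, while the same identity yields $\Delta(T)\le n_1\le K-1$ so that a proper $K$-edge-coloring exists at all. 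Thus the only binding constraint is the degree-$2$ one, which is precisely the hypothesis. I would then dispose of the regime $n_2\le n_1$ immediately: Theorem \ref{them:main-theorem} (with Lemmas \ref{them:lemma-aa} and \ref{them:lemma-bb}) gives $\chi\,'_s(T)\le n_1+1\le n_1+k$ there, the only heavier value $n_1+2$ occurring for $Q(0,2,0)=P_5$, which has $n_2=3>n_1=2$ and so lies outside this regime. The real content is therefore the regime $n_2>n_1$, where Theorem \ref{them:main-theorem} does not apply.

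For that regime my central device is a reformulation of the degree-$2$ constraint as an edge-disjoint trail problem in a complete graph. Suppress all degree-$2$ vertices of $T$ to obtain the homeomorphically reduced tree $T^{*}$, whose internal vertices all have degree $\ge 3$, with $n_1(T^{*})=n_1$ and $n_2(T^{*})=0$; each edge $uv$ of $T^{*}$ is realized in $T$ by a path $u=q_0q_1\cdots q_{m}=v$ through $m-1$ degree-$2$ vertices. A proper coloring of this path is a sequence $c_1,\dots,c_{m}$ with $c_i\ne c_{i+1}$, and its internal vertices receive the pairs $\{c_1,c_2\},\dots,\{c_{m-1},c_{m}\}$, which trace a trail in the complete graph $\mathcal{K}$ on the color set $[1,K]$. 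Global distinctness of all degree-$2$ color sets is then exactly the requirement that the trails coming from distinct edges of $T^{*}$ be pairwise edge-disjoint in $\mathcal{K}$, and they consume $n_2\le\binom{K}{2}=|E(\mathcal{K})|$ edges in total. Hence a $K$-vdec of $T$ splits into a vdec of $T^{*}$ together with a packing of prescribed-endpoint, pairwise edge-disjoint trails in $\mathcal{K}$.

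The construction then proceeds in two stages. First color $T^{*}$: since $n_2(T^{*})=0\le n_1$, Theorem \ref{them:main-theorem} supplies a vdec of $T^{*}$ with at most $n_1+1\le K$ colors, leaving at least $k\ge 1$ spare colors for slack. This fixes, at each branch vertex, the colors of the first edges of the incident paths, hence the prescribed endpoint colors $c_1$ and $c_m$ for every trail. Second, route the trails: because $\mathcal{K}=K_K$ is $(K-1)$-regular and $(K-1)$-edge-connected, and each individual trail meets any fixed vertex of $\mathcal{K}$ only a bounded number of times, I would route the trails globally, invoking standard Eulerian and edge-disjoint-path facts in $K_K$ to join the prescribed endpoints while keeping the residual graph connected, and calling on the spare colors whenever the budget $\binom{K}{2}-n_2$ is small. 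As a backbone I would organize this as an induction via Lemma \ref{them:lemma-11}, which always produces a degree-$2$ vertex adjacent to a leaf or to another degree-$2$ vertex: suppressing such a vertex drops $n_2$ by one, preserves $n_1$ and the hypothesis, and reduces the vertex count, so the recursion eventually bottoms out in the already-settled regime $n_2\le n_1$, the extension step at each stage being exactly the insertion of one trail edge.

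The step I expect to be the genuine obstacle is the compatibility at the branch vertices coupled with local saturation of $\mathcal{K}$. The endpoint colors of the several trails meeting at a degree-$\ge 3$ vertex are not independent: the first edges there must form a proper coloring realizing a color set distinct from every other branch vertex, so the trail endpoints are entangled across the edges at that vertex. Simultaneously, even though the global budget $\binom{K}{2}\ge n_2$ is adequate, the pairs through a single heavily used color can become exhausted, so a purely local peeling argument can stall. The way I would overcome this is to reserve the $k$ spare colors specifically to relieve tightness precisely when $n_2$ approaches $\binom{K}{2}$, and to plan all trails through $\mathcal{K}$ jointly rather than one vertex at a time, so that the high edge-connectivity of $K_K$ guarantees the prescribed-endpoint trail packing exists; making this routing argument fully rigorous, with the branch-vertex color sets fixed in advance, is the crux of the proof.
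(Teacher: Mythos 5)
You have attempted to prove a statement that the paper itself does not prove: Conjecture \ref{conj:shan-yybb-chen} is offered explicitly as ``further work,'' so there is no paper proof to compare against, and the only question is whether your argument closes the conjecture. It does not, and you in fact concede the decisive point: the packing of pairwise edge-disjoint trails with prescribed endpoints in the complete graph $\mathcal{K}=K_K$ on the color set is deferred to ``standard Eulerian and edge-disjoint-path facts,'' but no standard fact delivers this. Your reformulation is sound and genuinely the right lens --- it is exactly the correspondence under which \cite{Balister-Bollobas-Schelp} handled graphs with $\Delta(G)=2$ --- but that paper had to build the trail-packing machinery from scratch for the case of a \emph{single} path or cycle; here you need many trails simultaneously, with endpoint colors entangled at branch vertices, in the near-extremal regime where $n_2$ can approach $\tfrac{1}{2}(K-1)^2$. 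There parity genuinely bites: for $K$ even, every vertex of $K_K$ has odd degree, so any family of edge-disjoint trails is subject to degree-parity constraints at each color, and the slack $\binom{K}{2}-\tfrac{1}{2}(K-1)^2=\tfrac{1}{2}(K-1)$ left by the hypothesis is of exactly the same order as the parity obstruction --- so ``reserve the $k$ spare colors to relieve tightness'' is a heuristic where the whole difficulty lives, not an argument.

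Two subsidiary steps are also broken as stated. First, a vdec of the reduced tree $T^{*}$ assigns \emph{one} color to each edge $uv$ of $T^{*}$, but the corresponding path in $T$ needs \emph{two} end colors, one at each branch vertex, and for a path of length two these must differ; so your claim that coloring $T^{*}$ ``fixes the prescribed endpoint colors $c_1$ and $c_m$'' is false --- the endpoint colors are additional unknowns whose choice at each branch vertex must itself realize a distinguishing color set, which is the entanglement you name but do not resolve. Second, the induction backbone via Lemma \ref{them:lemma-11} does not reduce to ``insertion of one trail edge'': when you re-subdivide an edge $x_1x_2$ colored $\alpha$, the two new edges must receive distinct colors, so the color set of at least one of $x_1,x_2$ necessarily changes, perturbing the inductive vdec and potentially destroying distinguishing elsewhere. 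This is precisely why the paper's proof of Theorem \ref{them:main-theorem} requires an extensive case analysis with bespoke recolorings even in the much easier regime $n_2(T)\leq n_1(T)$. What survives of your proposal is correct but peripheral: the counting feasibility (via $n_1=2+\sum_{d\geq 3}(d-2)n_d$ from \cite{Yao-Zhang-Wang2010}), the disposal of the regime $n_2\leq n_1$ through Theorem \ref{them:main-theorem} with $k\geq 1$, and the trail-packing reformulation itself, which is a reasonable program for attacking the conjecture --- but the conjecture remains open after your argument.
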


\vskip 0.4cm

\noindent \textbf{Acknowledgment.} The authors thank sincerely two referees' sharp opinions and helpful suggestions that improve greatly the article. B. Yao was supported by the National Natural Science
Foundation of China under Grant No. 61163054 and No. 61363060; X.-en Chen was supported by the National Natural Science Foundation of China under Grant No. 61163037.

{\footnotesize

}

\end{document}